\definecolor{carmine}{rgb}{0.59, 0.0, 0.09}
\newtheorem{theorem}{Theorem}
\newtheorem{lemma}[theorem]{Lemma}
\newtheorem{proposition}[theorem]{Proposition}
\definecolor{ao}{rgb}{0.55, 0.71, 0.0}
\definecolor{bleudefrance}{rgb}{0.19, 0.55, 0.91}
\definecolor{dimgray}{rgb}{0.41, 0.41, 0.41}    
\definecolor{mediumorchid}{rgb}{0.73, 0.33, 0.83}
\definecolor{mediumtealblue}{rgb}{0.0, 0.33, 0.71}
\definecolor{harvestgold}{rgb}{0.85, 0.57, 0.0}
\definecolor{blue(pigment)}{rgb}{0.2, 0.2, 0.6}
\definecolor{forestgreen(traditional)}{rgb}{0.27, 0.35, 0.27}
\definecolor{cadmiumred}{rgb}{0.89, 0.0, 0.13}
\definecolor{orange(webcolor)}{rgb}{1.0, 0.5, 0.0}
\definecolor{tangerine}{rgb}{0.95, 0.52, 0.0}
\title{Minimum jointly structural input and output selection for strongly connected networks\footnote{
{\scriptsize G. Ramos and A. Pedro Aguiar are with the Department of Electrical and Computer Engineering, Faculty of Engineering, University of Porto, Portugal. Pequito is a faculty member at the Delft University of Technology in the Delft Center for Systems and Control.
 This work was supported in part by projects: IMPROVE (POCI-01-0145-FEDER-031823), funded by FEDER funds through COMPETE2020 – POCI and by the Portuguese national FCT/MCTES (PIDDAC); RELIABLE (PTDC/EEI-AUT/3522/2020) funded by FCT/MCTES; and DynamiCITY (NORTE-01-0145-FEDER-000073), funded by NORTE2020/PORTUGAL2020, through the European Regional Development Fund.}}
}
\date{May 20, 2021}
\author{
Guilherme Ramos, A. Pedro Aguiar, Sérgio Pequito}
\begin{document}
\maketitle

\begin{abstract}
In this paper, given a linear time-invariant strongly connected network, we study the problem of determining the minimum number of state variables that need to be simultaneously actuated and measured to ensure structural controllability and observability, respectively.  
This problem is fundamental in the design of multi-agent systems, where there are economic constraints in the decision of which agents to equip with a more costly on-board system  that will allow the agent to have both actuation and sensing capabilities. 
Despite the combinatorial nature of this problem, we present a solution that couples the design of both structural controllability and structural observability counterparts to address it with polynomial-time complexity. 
\end{abstract}

\section{Introduction}\label{sec:intro}

Multi-agent dynamical systems (MADS) 
can resolve problems that are challenging or unsuitable for solving either with a single agent or a monolithic system~\cite{dorri2018multi}. 
These systems emerge in a plethora of applications, including consensus problems~\cite{9304107,ramosIJoC2020}, target surveillance~\cite{hu2019distributed}, online trading~\cite{luo2018distributed}, network resistance~\cite{RAMOS2021104842}, disaster response~\cite{nadi2017adaptive}, and wireless sensor networks (WSN)~\cite{akyildiz2002wireless}, just to name a few.  

Two systems properties that are desirable in MADS are controllablility and observability, that enable the proper regulation and monitoring of the agents behavior. 
When dealing with large-scale MADS, we may need to equip a subset of agents with more expensive on-board capabilities to equip them with actuation and sensing capabilities. For instance, these often rely on long-range communication system to exchange both actuation and sensing information. 

A recurrent scheme for surveillance, exploration, and measuring tasks considers a multi-agent system composed of vehicles interconnected by a communication network. 
Missions involving the use of an extensive amount of such vehicles may adopt a leader/followers quest \cite{rribeiro:controlo,rribeiro:med}. 
For instance, in the following scenarios: 
\textit{(i)} expensive nodes (leaders) that can communicate with a ground station to receive mission commands and that need to be equipped with complex sensors or localization devices; and  
\textit{(ii)} cheaper drones (followers) executing local controllers based on onboard sensors that measure relative localization and receive a small amount of data from the leaders. 
Therefore, for budgetary reasons, a crucial task is to minimize the number of leaders, without compromising the overall system controllability and observability.

Furthermore, envisioned operational scenarios in search and rescue applications and environmental monitoring using autonomous robotic vehicles require mobile multi-agent systems with complementary sensor suites to increase task efficiency and performance. One example for the former case arises when there is a cooperation between heterogeneous unmanned aerial vehicles (UAVs)~\cite{kaminer2007coordinated}, where only one UAV carries on-board expensive sensors like infrared cameras or a LIDAR sensor. For the latter case, an example occurs in some marine applications that may include ambient data acquisition, pollution source localization, and mapping. In this case, some marine robotic vehicles may carry more sophisticated and high-performance sensor suites (that usually require some latency time to detect particles in water) than others.

In recent years, research has focused mainly in determining a solution for the minimal controllability problem (or, by duality between controllability and observability, the minimal observability problem)~\cite{olshevsky2014minimal,pequito2017robust,ramos2018robust}. 
Recently, the authors in~\cite{ramos2021robust} show that the minimum jointly input and output selection problem is NP-complete, and proposed  efficient polynomial-time algorithms to compute approximate solutions.   

Notwithstanding, in the context of MADS, we have the freedom of selecting the dynamics weights that would account for the communication protocol between the agents.
We propose to leverage structural systems  theory, that enables a parametric (i.e., a structure-based) approach to the minimum jointly input and output selection problem~\cite{ramos2020structural}. Structural counterparts of controllability and observability hold for almost all parametric choices in infinite fields. Furthermore, they leverage graph-theoretical characterizations in the context of efficient minimum actuator/sensor placement
~\cite{lin1974structural,pequito2015framework,ramos2020structural}. 

In this work, we propose a novel problem formulation and solution with potential implications in designing engineering systems. 
Furthermore, whereas insights from directly related problems (e.g., sparsest input/output structural controllability/observability~\cite{pequito2015framework}) are useful, the direct use of these approaches do not allow to solve the proposed  problem (i.e., they will lead to suboptimal solutions, as illustrated in the examples of Section~\ref{sec:ill_exp}). 
That said, under mild assumption on the network structure (strongly connected networks), a key contribution of this paper is the derivation of adequate transformations needed to reduce the problem to a combinatorial problem that can be efficiently solved using a maximum weight maximum matching, in which construction and weights are tailored to solve the proposed problem, with the formal proof presented in Theorem~\ref{th:soundness}. 
Furthermore, it is worth mention that the proposed reduction would not allow us to solve the sparsest input/output structural controllability/observability problems.   

In summary, we seek  to address the following research question.

\begin{description}
\item[$\overline{\textsf{\textbf{RQ}}}$] How can we efficiently find a minimal sensor and actuator placement sharing the maximum possible state variables that ensures system's structural controllability and observability? 
\end{description}

We organized the remainder of the paper as follows. 
In Section~\ref{sec:prob_stat}, we formally state the problem that we address in Section~\ref{sec:main}. 
Subsequently, we illustrate the proposed algorithm with examples in Section~\ref{sec:ill_exp}. 
Section~\ref{sec:conclusion} concludes the paper and sheds light on future research directions.


\subsection*{Notation}\label{sec:notation}

We denote the set of real numbers by $\mathbb R$ and the set of integers by $\mathbb Z$. Moreover, we denote by $\mathbb Z_0^+$ the set of non-negative integers. 

We denote matrices by upper-case letters, e.g., $A,B$ and $C$. Similarly, we denote vectors by lower-case letters, e.g., $x,y$ and $u$. For a vector $x\in\mathbb R^n$, we denote its $i$-th entry as $x_i$, where $i\in\{1,\ldots,n\}$ and, analogously, for a matrix $A\in\mathbb R^{n\times m}$, we denote the $i$-th row of $A$ by $A_i$ and the $j$-th entry of the $i$-th row by $A_{ij}$, where $i\in\{1,\ldots,n\}$ and $j\in\{1,\ldots,m\}$.  
We denote the identity matrix of size $n$ by $\mathbb I_n$. 
Given $A_1\in\mathbb R^{n\times m_1}$ and $A_2\in\mathbb R^{n\times m_2}$, we define by $[A_1,A_2]\in\mathbb R^{n\times (m_1+m_2)}$ the matrix whose first $m_1$ columns are the columns of $A_1$ and the last $m_2$ columns are the columns of $A_2$. 
Similarly, given $A_1\in\mathbb R^{n_1\times m}$ and $A_2\in\mathbb R^{n_2\times m}$, we define by $[A_1;A_2]\in\mathbb R^{(n_1+n_2)\times m}$ the matrix whose first $n_1$ rows are the rows of $A_1$ and the last $n_2$ rows are the rows of $A_2$.

We denote sets of numbers by calligraphic letters, e.g., $\mathcal I,\mathcal J$. The cardinality (size) of a set $\mathcal I$, $|\mathcal I|$ is the number of elements in the set. 
Furthermore, we denote by $\mathbb I_n^{\mathcal I}$, where $\mathcal I\subseteq\{1,\ldots,n\}$, the $n\times n$ matrix with the columns with indices in $\mathcal I$ equal to the columns of $\mathbb I_n$ and the remaining ones equal to zero. 
We use the semi-norm $\|\cdot\|_0$ function which counts the number of free parameters entries of a matrix, i.e., if $A\in\mathbb R^{n\times m}$ then $\|A\|_0=|\{A_{ij}\,:\,A_{ij}\neq 0,\text{ for }i=1,\ldots,n\text{ and }j=1,\ldots,m\}|$. 

A matrix $\bar M\in \{0,\star\}^{n\times m}$ is referred to as a \textit{structural matrix}. 
If $\bar M_{ij} = 0$, then $M_{ij}=0$, and if $\bar M_{ij}=\star$, then $M_{ij}\in\mathbb{R}$. 
Therefore,  if $\bar M_{ij}=\star$ then $M_{ij}$ is any arbitrary real number. 
Additionally, let $i\neq i'$ and $j\neq j'$, if $\bar M_{ij}=\star$ and $\bar M_{i'j'}=\star$ then $M_{ij}$ is assumed to be independent of $M_{i'j'}$. 
To simplify notation, given a structural matrix $\bar A\in\{0,\star\}^{n\times m}$ and $z\in\mathbb R$, we denote by $z \bar A\in\mathbb R^{n\times m}$ the matrix 
with the $\star$'s in $\bar A$ replaced by the number z.



Subsequently, we will make use of the following graph-theoretical notions. 
A \emph{digraph} (directed graph) is given by $\mathcal G=(\mathcal X,\mathcal E_{\mathcal X,\mathcal X})$, where $\mathcal X$ is a set of \emph{nodes} and $\mathcal E_{\mathcal X,\mathcal X}\subseteq \mathcal X\times\mathcal X$ is a set of \emph{edges} such that if $x_i,x_j\in\mathcal X$ and $(x_i,x_j)\in\mathcal E_{\mathcal X,\mathcal X}$ then there is an edge that starts in node $x_i$ and ends in node $x_j$. 
Given a structural matrix $\bar A\in\{0,\star\}^{n\times n}$, we associate to it the digraph representation $\mathcal G(\bar A)=(\mathcal X,\mathcal E_{\mathcal X,\mathcal X})$ such that $\mathcal X=\{x_1,\ldots,x_n\}$ and $\mathcal E_{\mathcal X,\mathcal X}=\{(x_i,x_j)\,:\,\bar A_{ji}\neq 0\}$.  

Given a digraph $\mathcal G=(\mathcal X,\mathcal E_{\mathcal X,\mathcal X})$, we define a \emph{path} from $x_1$ to $x_k$ with size $k$ as a sequence of nodes $(x_1,\ldots,x_k)$ such that $x_1,\ldots,x_k\in\mathcal X$, $x_i\neq x_j$ for $i\neq j$, and $(x_i,x_{i+1})\in\mathcal E_{\mathcal X,\mathcal X}$ for $i=1,\ldots,k-1$. 
A vertex with an edge to itself (i.e., a self-loop), or a path from $x_1$ to $x_k$ comprising an additional edge $(x_k,x_1)$, is called a \textit{cycle}.
A digraph is \emph{strongly connected} whenever there exists a path between each pair of nodes in the digraph. 

Additionally, we define a \emph{bipartite graph} as $\mathcal B=(\mathcal X_L,\mathcal X_R,\mathcal E_{\mathcal X_L,\mathcal X_R})$, where $\mathcal X_L\cup\mathcal X_R$ is the set of nodes with $\mathcal X_L\cap\mathcal X_R=\emptyset$, and $\mathcal E_{\mathcal X_L,\mathcal X_R}\subseteq \mathcal X_L\times\mathcal X_R$ is a set of edges. In other words, it is a graph with two disjoint sets of nodes such that there are only edges starting from nodes in the first set and ending in nodes of the second set. 
Moreover, we associate a structural matrix $\bar A\in\{0,\star\}^{n\times m}$ with a bipartite representation denoted by $\mathcal B(\bar A)=(\mathcal X_L,\mathcal X_R,\mathcal E_{\mathcal X_L,\mathcal X_R})$, where $X_L=\{x_1^L,\ldots,x_n^L\}$, $X_R=\{x_1^R,\ldots,x_m^R\}$, and $(x_i^L,x_j^R)\in \mathcal E_{\mathcal X_L,\mathcal X_R}$ whenever $\bar A_{ji}\neq 0$. 

In other words, we associated a bipartite graph where the second set of nodes is a virtual copy of the first. Additionally, the edges are represented as the original edges in $\mathcal G(\bar A)$, but where the starting node of an edge is in the first set of nodes and the ending node of an edge is in the second (virtual copy) of the nodes. 

Given a bipartite graph $\mathcal B=(\mathcal X_L,\mathcal X_R,\mathcal E_{\mathcal X_L,\mathcal X_R})$, a \emph{matching} $M\subseteq \mathcal E_{\mathcal X_L,\mathcal X_R}$ is a set of edges that do not share vertices, i.e., $(x,y),(x',y')\in M$ only if $x\neq x'$ and $y\neq y'$. A \emph{maximum matching} $M^\ast$ is a matching with the maximum possible number of edges. 
Given $\mathcal B=(\mathcal X_L,\mathcal X_R,\mathcal E_{\mathcal X_L,\mathcal X_R})$, the maximum matching problem can be solved with computational time complexity $\mathcal O(\sqrt{|\mathcal X_L\cup\mathcal X_R|}|\mathcal E_{\mathcal X_L,\mathcal X_R}|)$, which in the worst-case is $\mathcal O(\max\{|\mathcal X_L|,|\mathcal X_R|\}^{2.5})$~\cite{cormen2009introduction}. 
Furthermore, if we associate a weight $w_{ij}\in\mathbb{R}^+$ to each edge $e_{ij}$ of a bipartite graph, we may want to find a \emph{maximum weight maximum matching} (MWMM). 
In other words, a maximum matching with a maximal weight sum of the edges in the maximum matching. 
This problem can be solved utilizing, for instance, the Hungarian algorithm, with computational complexity $\mathcal O(\max\{|\mathcal X_L|,|\mathcal X_R|\}^3)$~\cite{cormen2009introduction}.

\section{Problem statement}\label{sec:prob_stat}
Consider a given (possibly large-scale) MADS described by the following linear time-invariant system (LTI) with autonomous dynamics  
\begin{equation}\label{eq:lti}
    x(k+1) = A x(k), 
\end{equation}
where $k\in\mathbb Z_0^+$, $x(k)\in\mathbb R^n$ denotes the state of the MADS, $A\in\mathbb R^{n\times n}$, and $x(0)=x_0$ is the initial state.

Given a system in~\eqref{eq:lti}, it is important to design matrices $B\in\mathbb R^{n\times p}$ and $C\in\mathbb R^{q\times n}$ so that \begin{equation}\label{eq:lti_2}
\begin{array}{rcl}
    x(k+1) &=& A x(k) + Bu(k),\\
    y(k)       &=& Cx(k),
\end{array}    
\end{equation}
is both controllable and observable, where $u(k)\in \mathbb R^p$ is the input signal, and $y(k)\in\mathbb R^q$ is the response of the system. To simplify the notation, we refer to~\eqref{eq:lti_2} as the triple $(A,B,C)$.  
Notice that~\eqref{eq:lti} and~\eqref{eq:lti_2} can be both posed in  continuous-time, as the controllability and observability criteria are the same.

Usually, for MADS, we have the freedom of selecting the dynamics weights of matrix $A$. Therefore, suppose that we only have available the sparsity pattern of $A$, i.e., the location of zeros and (possibly) non-zeros (free parameters) of the entries of $A$. 
When only the sparsity pattern is available, instead of designing matrices $B$ and $C$ that ensure controllability and observability of the system, we may design the structure of those matrices $\bar B$ and $\bar C$. 
In this case, the goal is to ensure structural controllability and structural observability of the triple $(\bar A,\bar B,\bar C)$~\cite{pequito2015framework}. 
Furthermore, it is common to use dedicated inputs and output in the context of MADS, as the actuators and sensors correspond to agents in the system that we actuate or observe.

Hence, the problem that we aim to solve in this paper is the following.

\noindent$\mathcal P_1$ Given a structural matrix $\bar A$ associated with~\eqref{eq:lti_2}, such that $\mathcal G(\bar A)$ is strongly connected, find 
\begin{equation}\label{eq:prob}
\begin{array}{rl}
    \mathcal I^\ast,\mathcal J^\ast & = \displaystyle\mathop{\arg\min}_{\mathcal I,\mathcal J\subseteq\{1,\ldots,n\}}|\mathcal I\cup\mathcal J|\\
    \text{s.t.} & (\bar A,\bar B=\bar{\mathbb I}_n^{\mathcal I},\bar C=\bar{\mathbb I}_n^{\mathcal J})\text{ is structurally}\\ & \text{controllable and observable,}
\end{array}
\end{equation}
where, for a set $\mathcal K\subset\{1,\ldots,n\}$, $\bar{\mathbb I}_n^{\mathcal K}\in\{0,\star\}^{n\times n}$ is a diagonal matrix such that $\bar{\mathbb I}^{\mathcal K}_{i,i}=\star$ whenever $i\in\mathcal K$.  

Observe that the requirement that a MADS is strongly connected is a common assumption in a plethora of applications~\cite{chi2020spatial,ramosIJoC2020,8028724}. 




A simple attempt to address problem $\mathcal P_1$ via decoupling it into the structural controllability and structural observability components would lead to the computation of all possible decoupled solutions to pinpoint a pair $(\mathcal I,\mathcal J)$ with a maximum intersection. Therefore, it would translate into a strictly combinatorial problem with a prohibitive computational complexity.

\section{Minimum jointly structural input and output selection for strongly connected networks}\label{sec:main}

Subsequently, we present necessary and sufficient conditions for the structural controllability and the structural observability of a system given $(\bar A,\bar B,\bar C)$.  
Recall that, in this paper, we are considering LTI systems that have a strongly connected system digraph representation. 
Therefore, we get the two lemmas of Theorem~3 from~\cite{pequito2015framework} stated below.

\begin{lemma}[\cite{pequito2015framework}]\label{lemma:struct_cont}
An LTI system~\eqref{eq:lti} with a strongly connected digraph representation is structurally controllable \underline{if and only if} 
    $\mathcal B\left( [\bar A, \bar B]\right)$ 
    has a maximum matching of size~$n$ and $\|B\|_0\geq 0$.
\hfill$\circ$
\end{lemma}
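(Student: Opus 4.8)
The plan is to obtain this equivalence by specialising the classical graph-theoretic characterisation of structural controllability~\cite{lin1974structural,pequito2015framework} to the strongly connected setting. Recall that a structured pair $(\bar A,\bar B)$ is structurally controllable if and only if two conditions hold simultaneously: (i) \emph{accessibility}, meaning that in the digraph $\mathcal G([\bar A,\bar B])$ every state node is reachable by a directed path emanating from some input node; and (ii) the \emph{no-dilation} condition, meaning that there is no set $\mathcal S$ of state nodes whose set of predecessors (among both state and input nodes) has cardinality strictly smaller than $|\mathcal S|$. My first step is to state this decomposition and then show that, under the strong-connectivity hypothesis, condition (ii) is exactly the matching requirement while condition (i) reduces to the mild $\|B\|_0$ requirement.

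The substantive step is to identify the no-dilation condition with the existence of a maximum matching of size $n$ in $\mathcal B([\bar A,\bar B])$. Here I would invoke the Frobenius--K\"onig correspondence between the term (generic) rank of a structural matrix and the size of a maximum matching of its bipartite representation: the generic rank of $[\bar A,\bar B]$ equals the maximum matching size of $\mathcal B([\bar A,\bar B])$. Because this bipartite graph carries the $n$ state-row nodes on one side and the $n+p$ column nodes on the other, a matching of size $n$ is precisely a complete matching of the state-row side; by Hall's theorem (equivalently, the defect form of K\"onig's theorem) such a complete matching exists if and only if no subset of state nodes violates Hall's condition, which is exactly the absence of a dilation. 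This equivalence, together with careful bookkeeping of which side of the bipartite graph carries the states, is the part I expect to demand the most care.

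For the accessibility condition I would use strong connectivity directly. Since $\mathcal G(\bar A)$ is strongly connected, there is a directed path between every ordered pair of state nodes; hence, as soon as $\bar B$ contributes a single edge from an input to any one state, that input reaches every state and accessibility holds automatically. Thus the accessibility half of the classical characterisation degenerates, under strong connectivity, into the almost trivial requirement recorded by the $\|B\|_0$ condition, namely the presence of at least one input edge, and no separate reachability argument is needed.

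Finally, I would assemble the two directions contrapositively. If $\mathcal B([\bar A,\bar B])$ admits no matching of size $n$, then some state-row node is left unsaturated, exhibiting a dilation and hence a generic rank deficiency that obstructs structural controllability; conversely, a size-$n$ matching supplies full generic row rank, which combined with the accessibility guaranteed by strong connectivity and the presence of an input yields structural controllability via the classical theorem. The accessibility direction is immediate from strong connectivity, so the only genuinely delicate ingredient is the matching/term-rank equivalence of the second paragraph.
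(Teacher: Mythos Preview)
The paper does not actually prove this lemma: it is imported verbatim as a specialisation of Theorem~3 in~\cite{pequito2015framework}, so there is no in-paper argument to compare against. Your sketch is the standard derivation one would find in that reference: start from Lin's classical characterisation (accessibility plus absence of dilation), identify the no-dilation condition with full generic row rank of $[\bar A,\bar B]$ via Hall/K\"onig (equivalently, a size-$n$ matching in $\mathcal B([\bar A,\bar B])$), and observe that under strong connectivity of $\mathcal G(\bar A)$ accessibility collapses to the requirement that $\bar B$ contribute at least one edge.

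Two small remarks. First, you correctly read the intended meaning of the $\|B\|_0$ clause as ``at least one input edge'' (the paper itself says as much right after the lemmas), even though the printed inequality $\|B\|_0\geq 0$ is vacuous as written; your proof handles the intended strict version. Second, your bookkeeping on which side of $\mathcal B([\bar A,\bar B])$ must be saturated is right in spirit, but be aware that the paper's bipartite convention (edge $(x_i^L,x_j^R)$ when the $(j,i)$ entry is nonzero) makes the ``row'' side the right-hand partition; the size-$n$ matching saturates the $n$ state-row vertices regardless of which side they sit on, so your Hall/dilation identification goes through unchanged once the sides are labelled consistently with the paper.
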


\begin{lemma}[\cite{pequito2015framework}]\label{lemma:struct_obs}
An LTI system~\eqref{eq:lti} with a strongly connected digraph representation is structurally observable \underline{if and only if} 
    $\mathcal B\left( [\bar A; \bar C]\right)$ 
    has a maximum matching of size~$n$ and $\|C\|_0\geq 0$.
\hfill$\circ$
\end{lemma}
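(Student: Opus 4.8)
The plan is to derive the observability characterization as the exact dual of the controllability one in Lemma~\ref{lemma:struct_cont}, so that essentially no new combinatorics are needed. First I would invoke the structural version of Kalman duality: $(\bar A,\bar C)$ is structurally observable if and only if the dual pair $(\bar A^\top,\bar C^\top)$ is structurally controllable. This holds because the generic rank of the observability matrix of $(\bar A,\bar C)$ coincides with the generic rank of the controllability matrix of $(\bar A^\top,\bar C^\top)$, and structural observability/controllability is precisely the statement that this rank is generically $n$.

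With that reduction in place, I would check three elementary facts that let me transport Lemma~\ref{lemma:struct_cont} across the duality. \emph{(i)} $\mathcal G(\bar A)$ is strongly connected if and only if $\mathcal G(\bar A^\top)$ is, since transposing $\bar A$ reverses every edge of the digraph and edge-reversal preserves the existence of a directed path between every ordered pair of nodes; hence the strong-connectivity hypothesis is inherited by the dual system. \emph{(ii)} Since $[\bar A;\bar C]$ is the transpose of $[\bar A^\top,\bar C^\top]$, the bipartite graphs $\mathcal B([\bar A;\bar C])$ and $\mathcal B([\bar A^\top,\bar C^\top])$ are identical up to swapping the left and right node sets; a matching is an unordered set of vertex-disjoint edges, so this swap is a bijection on matchings and preserves their size, whence one has a maximum matching of size $n$ if and only if the other does. \emph{(iii)} $\|\bar C^\top\|_0=\|\bar C\|_0$, as transposition does not change the number of free entries. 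Applying Lemma~\ref{lemma:struct_cont} to the strongly connected dual pair $(\bar A^\top,\bar C^\top)$ and translating back through \emph{(i)}--\emph{(iii)} yields exactly the asserted condition.

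To be confident the reduction rests on solid ground, I would also recall the graph-level justification underlying Lemma~\ref{lemma:struct_cont} itself, namely Lin's structural controllability theorem~\cite{lin1974structural}: $(\bar A,\bar B)$ is structurally controllable if and only if its input digraph has \emph{no inaccessible state node} and \emph{no dilation}. Under strong connectivity the accessibility condition becomes automatic the moment a single state is dedicated: from any actuated node there is a directed path to every other node, so the whole state set is accessible. The absence of dilations is a Hall-type deficiency condition on the one-step in-neighborhoods of subsets of state nodes, and by König's theorem it is equivalent to $\mathcal B([\bar A,\bar B])$ admitting a matching that saturates all $n$ state nodes, i.e.\ a maximum matching of size $n$. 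Dualizing gives the observability statement directly.

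The main obstacle is the equivalence between ``no dilation'' and ``maximum matching of size $n$'': it needs a careful König/Hall argument together with attention to how the dedicated output (resp.\ input) nodes are allowed to participate in the saturating matching. A secondary subtlety concerns the side condition $\|C\|_0\ge 0$, which I would scrutinize: a strongly connected $\bar A$ whose digraph already admits a spanning cycle cover attains matching size $n$ using only the $\bar A$-edges, yet with no dedicated output such a system is \emph{not} observable. Hence the accessibility half of the argument genuinely requires at least one dedicated output, and I would read (or correct) the side condition as guaranteeing a nonempty $\bar C$, i.e.\ $\|C\|_0\ge 1$, before treating the matching condition as sufficient.
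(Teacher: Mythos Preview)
The paper does not actually prove Lemma~\ref{lemma:struct_obs}: it is stated as a direct consequence of Theorem~3 in~\cite{pequito2015framework} and is left uncited beyond that reference, so there is no ``paper's own proof'' to compare against. Your duality argument is the standard and correct way to obtain the observability statement from the controllability one, and the three transport facts \emph{(i)}--\emph{(iii)} are exactly what is needed.

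Your scrutiny of the side condition is also on target. As written, $\|C\|_0\ge 0$ is vacuous, and the paper itself acknowledges (in the sentence following the two lemmas) that the intended meaning is ``at least one input and one output should be placed,'' i.e.\ $\|C\|_0\ge 1$. Your counterexample---a strongly connected $\bar A$ with a spanning cycle cover and empty $\bar C$---shows precisely why the matching condition alone is insufficient and the nonemptiness of $\bar C$ is genuinely required for the accessibility half of Lin's criterion on the dual side. So your reading of the condition as $\|C\|_0\ge 1$ is the correct one, and the lemma as literally stated has a minor typo that you have correctly diagnosed.
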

Notice that the conditions $\|B\|_0\geq 0$ and $\|C\|_0\geq 0$ are only imposing that at least one input and one output should be placed to have a structurally controllable structural and structurally observable system. 

To overcome the identified computational intractability issue, we propose the following efficient (with polynomial-time complexity) algorithm -- see Algorithm~\ref{alg:main}. 

\begin{algorithm}[H]
		{
		\caption{Dedicated solution to $\mathcal P_1$}
		\label{alg:main}
		\begin{algorithmic}[1]
			\STATE{\textbf{input}: A structural dynamics matrix $\bar A$}
			\STATE{\textbf{output}: An input and output matrices, $\bar{\mathbb I}_n^{\mathcal I^\ast}$ and $\bar{\mathbb I}_n^{\mathcal J^\ast}$ respectively, describing a dedicated solution to $\mathcal P_1$}
			\STATE{\textbf{build} the bipartite graph $\mathcal B(\bar D)=(\mathcal X_L,\mathcal X_R,\mathcal E_{\mathcal X_L,\mathcal X_R})$, where  
			$$
			D =\begin{bmatrix}
            3\bar A   & \mathbb I_n\\
            \mathbb I_n & 2 \bar{\mathbb I}_n
            \end{bmatrix}
			.$$} 
			\STATE{\textbf{compute} $\mathcal M$ a MWMM of $\mathcal B(\bar D)$ with edges' weights given by $D$}
			\STATE{\textbf{set} $\mathcal I^\ast \,= \{(i,j)\in\mathcal M\,:\,i> n\land j\leq n\}$}
			\STATE{\textbf{set} $\mathcal J^\ast = \{(i,j)\in\mathcal M\,:\,i\leq n\land j > n\}$}
			\IF{$\mathcal I^\ast=\mathcal J^\ast=\emptyset$}
                \STATE{\textbf{select} one $i\in\{1,\ldots,n\}$}
                \STATE{\textbf{set} $\mathcal I^\ast=\mathcal J^\ast=\{i\}$}
            \ENDIF
		\end{algorithmic} 
		}
\end{algorithm} 

Intuitively, the first $n$ nodes of $\mathcal B(\bar D)$ correspond to the system state variables and the last $n$ nodes to inputs and output that we may activate. 
We can group the edges $(i,j)$ of $\mathcal B(\bar D)$ as follows:
\begin{itemize}
    \item $i,j\leq n$ that correspond to edges of $\mathcal G(\bar A)$;
    \item  $i>n$ and $j\leq n$ that represent connections between inputs and state variables;
    \item  $i\leq n$ and $j> n$ that represent connections between state variables and outputs;
    \item  $i> n$ and $j> n$ that represent connections between inputs and outputs (when chosen, the respective input and output are not selected to be used). 
\end{itemize} 
Then, Algorithm~\ref{alg:main} finds a maximum matching which matches the maximum possible number of nodes that correspond to state variables (corresponding to the $3\bar A$ part of $D$), while trying to place an input and an output to vertices that correspond to the same state variable (corresponding to the $\mathbb I_n$ parts of $D$). 
Moreover, this is done considering the use of the smallest possible number of inputs and outputs (corresponding to the $2\bar{\mathbb I}_n$ part of $D$). 
Only if it is not possible to assign an input and an output to nodes that correspond to the same state variables, different state variables are chosen.

\begin{theorem}\label{th:soundness}
Algorithm~\ref{alg:main} is sound, i.e., it computes a solution to problem $\mathcal P_1$.~\hfill$\circ$
\end{theorem}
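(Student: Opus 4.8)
The statement asserts two things about Algorithm~\ref{alg:main}: \emph{feasibility} (the returned $(\mathcal I^\ast,\mathcal J^\ast)$ makes the triple structurally controllable and observable) and \emph{optimality} (it minimizes $|\mathcal I\cup\mathcal J|$ in \eqref{eq:prob}). My plan is to prove these separately, both organized around a decomposition of the computed matching $\mathcal M$ into four edge types dictated by the block structure of $\bar D$. First I would note that matching every left state-copy to its own output-copy and every input-copy to its own state-copy is a perfect matching of $\mathcal B(\bar D)$, so the maximum cardinality is $2n$ and the MWMM $\mathcal M$ is perfect. Then I classify each edge $(x_i^L,x_j^R)\in\mathcal M$ by its originating block: \emph{dynamics} edges ($i,j\le n$, weight $3$), \emph{input} edges ($i>n$, $j=i-n$, weight $1$), \emph{output} edges ($i\le n$, $j=i+n$, weight $1$), and \emph{idle} input--output edges ($i=j>n$, weight $2$).

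For feasibility, I would observe that the dynamics edges together with the input edges saturate every right node of index $\le n$ and restrict to a size-$n$ matching of $\mathcal B([\bar A,\bar{\mathbb I}_n^{\mathcal I^\ast}])$, since each input edge $i\to i-n$ corresponds precisely to $\bar{\mathbb I}^{\mathcal I^\ast}_{i-n,i-n}=\star$. Dually, the dynamics edges plus the output edges give a size-$n$ matching of $\mathcal B([\bar A;\bar{\mathbb I}_n^{\mathcal J^\ast}])$. The \textbf{if} branch of Algorithm~\ref{alg:main} guarantees $\mathcal I^\ast,\mathcal J^\ast\neq\emptyset$, and because $\mathcal G(\bar A)$ is strongly connected, Lemma~\ref{lemma:struct_cont} and Lemma~\ref{lemma:struct_obs} then deliver structural controllability and observability.

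For optimality I would set up a weight bookkeeping. Let $a$ be the number of dynamics edges in a perfect matching. Counting how each node class is saturated gives $\#\text{dyn}+\#\text{in}=n$ (state rows), $\#\text{dyn}+\#\text{out}=n$ (state columns), $\#\text{out}+\#\text{idle}=n$ (output rows), $\#\text{in}+\#\text{idle}=n$ (input columns); hence $\#\text{in}=\#\text{out}=n-a$ and $\#\text{idle}=a$, and the total weight is $3a+(n-a)+(n-a)+2a=2n+3a$, independent of every other choice. Thus the MWMM is exactly a perfect matching \emph{maximizing $a$}. The same count forces the dynamics edges to cover the same index set on both sides, so they form a disjoint union of cycles of $\mathcal G(\bar A)$, $\mathcal I^\ast=\mathcal J^\ast$, and $|\mathcal I^\ast\cup\mathcal J^\ast|=n-a$. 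Consequently $\max a$ equals the maximum number of vertices coverable by vertex-disjoint cycles. To close the lower bound I would argue that for any feasible $(\mathcal I,\mathcal J)$ one may take $\mathcal I=\mathcal J=\mathcal I\cup\mathcal J=:T$ (enlarging inputs or outputs preserves feasibility and leaves the union unchanged); controllability then forces $\mathcal B(\bar A)$ to match the $n-|T|$ state-rows outside $T$, so $n-|T|$ is at most the maximum matching size of $\mathcal B(\bar A)$.

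The hard part is the bridge between these two quantities: I must show that for a \emph{strongly connected} $\mathcal G(\bar A)$ the maximum matching size of $\mathcal B(\bar A)$ equals the maximum disjoint-cycle cover (equivalently, some maximum matching decomposes into cycles only). Granting this lemma, $n-|T|\le\max a=|\,\{1,\dots,n\}\setminus(\mathcal I^\ast\cup\mathcal J^\ast)\,|$, which is precisely optimality. This lemma is the main obstacle, and it is genuinely where strong connectivity enters: it \emph{fails} in general (a directed path $x_1\to x_2\to x_3$ has matching size $2$ but no cycle), so the argument cannot be purely combinatorial on the matching. I would prove it by taking a maximum matching with the fewest path components, viewing it in $\mathcal G(\bar A)$ as disjoint paths and cycles, and using a directed return path from the sink to the source of a path component (guaranteed by strong connectivity) to build an $M$-alternating structure that absorbs the path into cycles without reducing $|M|$, contradicting minimality of the path count; alternatively one can invoke the Dulmage--Mendelsohn/irreducibility structure of $\bar A$.
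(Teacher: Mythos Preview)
Your weight bookkeeping is correct and in fact sharper than the paper's informal argument: every perfect matching of $\mathcal B(\bar D)$ forces the set of dynamics-matched left indices to equal the set of dynamics-matched right indices, so the dynamics block is a vertex-disjoint cycle family, $\mathcal I^\ast=\mathcal J^\ast$, and the MWMM returns $|\mathcal I^\ast\cup\mathcal J^\ast|=n-a^\ast$ where $a^\ast$ is the maximum number of vertices coverable by disjoint cycles of $\mathcal G(\bar A)$. Feasibility is fine.

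The genuine gap is your bridge lemma: it is \emph{false} for strongly connected digraphs. Take $V=\{1,\dots,5\}$ with edges $1\!\to\!2,\,2\!\to\!3,\,3\!\to\!1,\,3\!\to\!4,\,4\!\to\!5,\,5\!\to\!3$ (two triangles glued at vertex $3$). This graph is strongly connected; the only simple cycles are $\{1,2,3\}$ and $\{3,4,5\}$, so $a^\ast=3$. Yet $\mathcal B(\bar A)$ has the matching $\{1\!\to\!2,\,2\!\to\!3,\,3\!\to\!4,\,4\!\to\!5\}$ of size $4$, and $m^\ast=4$ because $L_2$ and $L_5$ both compete for $R_3$. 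Your proposed absorption argument cannot succeed here: the return path $5\to 3\to 1$ meets the matched path at the interior vertex $3$, and every exchange along it yields another size-$4$ matching that is still a single path (e.g.\ $4\!\to\!5\!\to\!3\!\to\!1\!\to\!2$); no maximum matching of this graph decomposes into cycles only. Consequently your lower bound $|T|\ge n-m^\ast$ (here $\ge 1$) is too weak to reach $n-a^\ast=2$, and the lemma you invoke to close the gap does not hold. What optimality actually requires is that whenever $\bar T$ is \emph{simultaneously} right-saturable and left-saturable in $\mathcal B(\bar A)$ (by possibly different matchings), one has $|\bar T|\le a^\ast$; this is a statement about pairs of matchings with a common saturated index set on opposite sides, not about a single maximum matching, and it is not delivered by an augmenting-path trick using strong connectivity. (The paper's own optimality paragraph is heuristic and also glosses over this point---indeed its claim that $\mathcal I^\ast$ is a \emph{minimum} dedicated input set would already fail on the example above, where the algorithm outputs $|\mathcal I^\ast|=2$ but a single input suffices---but your proposal commits to a specific lemma that is provably false.)
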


\begin{proof}
First, we observe that $\mathcal I^\ast$ comprises a minimum set of dedicated inputs, which represents a maximum matching of size $n$ for the bipartite graph $\mathcal B([\bar A,\bar{\mathbb I}_n^{\mathcal I^\ast}])$, where $\bar{\mathbb I}_n^{\mathcal I^\ast}$ is a diagonal matrix whose entries in $\mathcal{I}^\ast$ are free parameters. Moreover, if the MWMM results in a perfect matching when restricted to $\mathcal B(\bar A)$, then $\mathcal I^\ast=\mathcal J^\ast=\emptyset$ and, in steps~7-9, we select any state variable to place both an input and an output, yielding a minimum value of $|\mathcal I^\ast\cup \mathcal J^\ast|=1$. As we are assuming that $\mathcal G(\bar A)$ is strongly connected, by Lemma~\ref{lemma:struct_cont} and Lemma~\ref{lemma:struct_obs}, the system is structurally controllable and structurally observable, respectively. 
Otherwise, we obtain the MMWM $\mathcal M$ of step~4, where we filter the edges to account only for connections between indices of state variables and indices of input variables in steps~5 and~6. 
In other words, $\mathcal M'=\{(i,j)\in\mathcal M\,:\,j\leq n\}$ is a maximum matching of $\mathcal B(\bar A)$. 
Hence, by Lemma~\ref{lemma:struct_cont}, $(\bar A,\bar B=\bar{\mathbb I}_n^{\mathcal I^\ast})$ is structurally controllable. 

Following a similar reasoning, $\mathcal J^\ast$ comprises a minimum set of dedicated outputs, which represents a MWMM of size $n$ for the bipartite graph $\mathcal B([\bar A;\bar{\mathbb I}_n^{\mathcal J^\ast}])$. This MWMM $\mathcal M''$ results from the MWMM $\mathcal M$ of step~4, and it is $\mathcal M''=\{(i,j)\in\mathcal M\,:\,i\leq n\}$. Therefore, by Lemma~\ref{lemma:struct_obs}, $(\bar A,\bar C=\bar{\mathbb I}_n^{\mathcal J^\ast})$ is structurally observable. 

Now, note that we know that the triple $(\bar A,\bar B=\bar{\mathbb I}_n^{\mathcal I^\ast},\bar C=\bar{\mathbb I}_n^{\mathcal J^\ast})$ is structurally controllable and structurally observable. Further, we need to check that the cost function $|\mathcal I^\ast\cup\mathcal J^\ast|$ is minimized with the solution found. 

In the creation of $\mathcal B(\bar D)$, we assigned weight 3 to the edges of $\mathcal B(\bar A)$, forcing those to be, preferably, selected to the MWMM. 
Moreover, we placed an edge with weight 2 between each dedicated input and dedicated output pair with the same index. 
By doing so, whenever it is possible to place both an input and an output to the same state variables, the MWMM of $\mathcal B(\bar D)$ increases because it matches another pair of input-output vertices which could not be pared before. 
Finally, the remaining edges have weight 1, forcing them to be only selected for the MWMM if there is no other option. 
In other words, a state variable only has an input and not an output (or vice-versa) if it cannot have both. 
Hence, the MWMM selects the maximum possible number of pairs input-output to actuate and observe the same state variable. 
\end{proof}

Next, we analyse the computational complexity of Algorithm~\ref{alg:main}. 

\begin{proposition}\label{prop:complexity}
The worst-case computational time-complexity of Algorithm~\ref{alg:main} is $\mathcal O(n^{3})$.\hfill$\circ$ 
\end{proposition}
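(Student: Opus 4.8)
The plan is to identify the dominant step of Algorithm~\ref{alg:main} and bound its cost using the matching-complexity figures already recorded in the Notation section, and then to verify that every other step is subdominant. The key structural observation I would start from is that the matrix $\bar D$ constructed in step~3 has size $2n\times 2n$; consequently its bipartite representation $\mathcal B(\bar D)$ has exactly $2n$ nodes on each side, so $|\mathcal X_L|=|\mathcal X_R|=2n$, with at most $\mathcal O(n^2)$ edges.

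First I would bound step~3: building $\mathcal B(\bar D)$ requires scanning the $4n^2$ entries of $\bar D$ and recording, for each nonzero entry, the corresponding weighted edge (with weight taken from $D$), which costs $\mathcal O(n^2)$. Next, for the core step~4, I would invoke the bound quoted in the Notation section, namely that the Hungarian algorithm computes a MWMM in time $\mathcal O(\max\{|\mathcal X_L|,|\mathcal X_R|\}^3)$. Substituting $|\mathcal X_L|=|\mathcal X_R|=2n$ yields $\mathcal O((2n)^3)=\mathcal O(n^3)$, which I expect to be the dominant contribution. Finally, steps~5--9 merely postprocess the matching $\mathcal M$, which has at most $2n$ edges: filtering the edges according to the index conditions $i>n\land j\le n$ and $i\le n\land j>n$ costs $\mathcal O(n)$, while the emptiness test together with the single-index selection in steps~7--9 is $\mathcal O(1)$.

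Summing the contributions gives $\mathcal O(n^2)+\mathcal O(n^3)+\mathcal O(n)=\mathcal O(n^3)$, which establishes the claim. The analysis is essentially routine; the only point requiring care — and the one I would state explicitly — is that doubling the dimension in the construction of $\bar D$ (from $n$ to $2n$) leaves the asymptotic order unchanged, since $\mathcal O((2n)^3)=\mathcal O(n^3)$. There is no genuine obstacle beyond correctly attributing the cubic cost to the single MWMM call and confirming that the graph construction and the matching-extraction bookkeeping are of strictly lower order.
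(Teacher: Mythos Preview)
Your argument is correct and follows essentially the same route as the paper's proof: both identify the MWMM computation on $\mathcal B(\bar D)$ as the dominant step, invoke the Hungarian algorithm's $\mathcal O(\max\{|\mathcal X_L|,|\mathcal X_R|\}^3)$ bound with $|\mathcal X_L|=|\mathcal X_R|=2n$, and conclude $\mathcal O(n^3)$. Your version is simply more explicit in bounding the remaining steps (graph construction and postprocessing) and confirming they are subdominant, which the paper leaves implicit.
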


\begin{proof}
Step~5 can be solved using the Hungarian algorithm~\cite{cormen2009introduction}, which finds a MWMM of $\mathcal B(\bar D)$ with time-complexity $\mathcal O(\max\{|\mathcal X_L|,|\mathcal X_R|\}|^3 )$. 
Since $|\mathcal X_L|=|\mathcal X_R|=2n=\mathcal O(n)$, then the time-complexity of step~5 is $\mathcal O(n^{3})$.  
\end{proof}

Note that we can obtain an approximated solution in almost linear-time (in the number of vertices and edges of the associated system's digraph) if we allow obtaining approximated MWMM in Algorithm~\ref{alg:main}.  
For example, we may use~\cite{duan2014linear} which allows us to obtain a $(1-\varepsilon)$-approximation of the solution (for any specified $\varepsilon>0$), with time complexity, that depend on $\varepsilon$, of $\mathcal O\left(M\frac{1}{\varepsilon}\log\frac{1}{\varepsilon}\right)$ (i.e., linear time), where $M$ is the number of edges of $\mathcal B(\bar D)=(\mathcal X_L,\mathcal X_R,\mathcal E_{\mathcal X_L,\mathcal X_R})$ built in Algorithm~\ref{alg:main}.


In the next section, we illustrate the proposed method with examples, and compare it with the simple approach that only aims to find minimal dedicated input and output placements (without necessarily maximizing the intersections between the two).

\section{Illustrative examples}\label{sec:ill_exp}

In this section, we explore three examples. The first two correspond to structural matrices representing MADSs with bidirectional communication networks. The last one represents a unidirectional communication network.  
We compare the proposed approach against solving the structural controllability and observability parts separately. In the three examples, we achieve a solution that uses a smaller number of actuated/observed state variables than the separate solution. 

\subsection{Example 1}
To illustrate how Algorithm~\ref{alg:main} works, consider a structural matrix 
\[\bar A=
\begin{bmatrix}
0 & \star & 0\\
\star & 0 & \star\\
0 & \star & 0
\end{bmatrix},
\]
whose digraph representation is depicted in Figure~\ref{fig:exemplo}. 

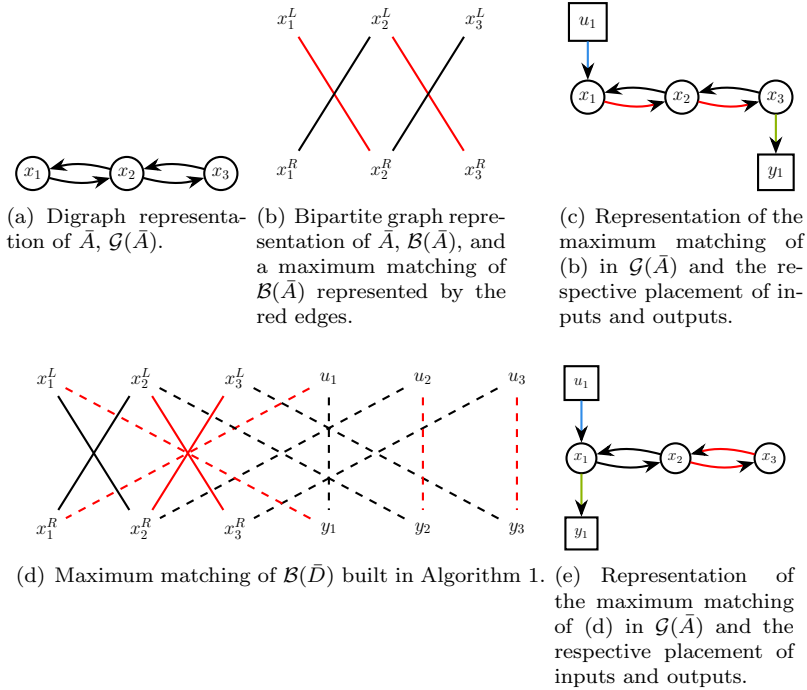
\begin{figure}[H]
\centering
\subfigure[Digraph representation of $\bar A$, $\mathcal G(\bar A)$.]{
\begin{tikzpicture}[scale=.5, transform shape,node distance=1.5cm]
\begin{scope}[every node/.style={circle,thick,draw},square/.style={regular polygon,regular polygon sides=4}]
\node (1) at (0.,0.) {\Large $x_1$};
\node (2) at (2.50076,0.) {\Large $x_2$};
\node (3) at (5.,0.) {\Large $x_3$};
\end{scope}
\begin{scope}[>={Stealth[black]},
              every edge/.style={draw=black, thick}]
\path [->] (1) edge[bend right=15] node {} (2);
\path [->] (2) edge[bend right=15] node {} (1);
\path [->] (2) edge[bend right=15] node {} (3);
\path [->] (3) edge[bend right=15] node {} (2);
\end{scope}
\end{tikzpicture}
}
\subfigure[Bipartite graph representation of $\bar A$, $\mathcal B(\bar A)$, and a maximum matching of $\mathcal B(\bar A)$ represented by the red edges.]{
\begin{tikzpicture}[scale=.5, transform shape,node distance=1.5cm]
\begin{scope}[every node/.style={circle,thick,draw=white},square/.style={regular polygon,regular polygon sides=4}]
\node (1) at (0.,2) {\Large $x_1^L$};
\node (2) at (2.5,2) {\Large $x_2^L$};
\node (3) at (5.,2) {\Large $x_3^L$};
\node (1 ) at (0.,-2) {\Large $x_1^R$};
\node (2 ) at (2.5,-2) {\Large $x_2^R$};
\node (3 ) at (5.,-2) {\Large $x_3^R$};
\end{scope}
\begin{scope}[>={Stealth[black]},
              every edge/.style={draw=black, thick}]
\path [-] (1) edge[red] node {} (2 );
\path [-] (2) edge node {} (1 );
\path [-] (2) edge[red] node {} (3 );
\path [-] (3) edge node {} (2 );
\end{scope}
\end{tikzpicture}
}\qquad
\subfigure[Representation of the maximum matching of (b) in $\mathcal G(\bar A)$ and the respective placement of inputs and outputs.]{
\begin{tikzpicture}[scale=.5, transform shape,node distance=1.5cm]
\begin{scope}[every node/.style={circle,thick,draw},square/.style={regular polygon,regular polygon sides=4}]
\node (1) at (0.,0.) {\Large $x_1$};
\node (2) at (2.50076,0.) {\Large $x_2$};
\node (3) at (5.,0.) {\Large $x_3$};
\node[square,draw] (u) at (0.,2) {\Large $u_1$};
\node[square,draw] (y) at (5.,-2) {\Large $y_1$};
\end{scope}
\begin{scope}[>={Stealth[black]},
              every edge/.style={draw=black, thick}]
\path [->] (1) edge[red,bend right=15] node {} (2);
\path [->] (2) edge[bend right=15] node {} (1);
\path [->] (2) edge[red,bend right=15] node {} (3);
\path [->] (3) edge[bend right=15] node {} (2);
\path [->] (3) edge[ao] node {} (y);
\path [->] (u) edge[bleudefrance] node {} (1);

\end{scope}
\end{tikzpicture}
}

\subfigure[Maximum matching of $\mathcal B(\bar D)$ built in Algorithm~\ref{alg:main}.]{
\begin{tikzpicture}[scale=.5, transform shape,node distance=1.5cm]
\begin{scope}[every node/.style={circle,thick,draw=white},square/.style={regular polygon,regular polygon sides=4}]
\node (1) at (0.,2) {\Large $x_1^L$};
\node (2) at (2.5,2) {\Large $x_2^L$};
\node (3) at (5.,2) {\Large $x_3^L$};
\node (1 ) at (0.,-2) {\Large $x_1^R$};
\node (2 ) at (2.5,-2) {\Large $x_2^R$};
\node (3 ) at (5.,-2) {\Large $x_3^R$};
\node[square,draw] (u1) at (7.5,2) {\Large $u_1$};
\node[square,draw] (u2) at (10,2) {\Large $u_2$};
\node[square,draw] (u3) at (12.5,2) {\Large $u_3$};
\node[square,draw] (y1) at (7.5,-2) {\Large $y_1$};
\node[square,draw] (y2) at (10,-2) {\Large $y_2$};
\node[square,draw] (y3) at (12.5,-2) {\Large $y_3$};
\end{scope}
\begin{scope}[>={Stealth[black]},
              every edge/.style={draw=black, thick}]
\path [-] (1) edge node {} (2 );
\path [-] (2) edge node {} (1 );
\path [-] (2) edge[red] node {} (3 );
\path [-] (3) edge[red] node {} (2 );
\path [-] (u1) edge[red,dashed] node {} (1 );
\path [-] (u2) edge[dashed] node {} (2 );
\path [-] (u3) edge[dashed] node {} (3 );
\path [-] (1) edge[red,dashed] node {} (y1);
\path [-] (2) edge[dashed] node {} (y2);
\path [-] (3) edge[dashed] node {} (y3);
\path [-] (u1) edge[dashed] node {} (y1);
\path [-] (u2) edge[red,dashed] node {} (y2);
\path [-] (u3) edge[red,dashed] node {} (y3);
\end{scope}
\end{tikzpicture}
}
\subfigure[Representation of the maximum matching of (d) in $\mathcal G(\bar A)$ and the respective placement of inputs and outputs.]{
\begin{tikzpicture}[scale=.5, transform shape,node distance=1.5cm]
\begin{scope}[every node/.style={circle,thick,draw},square/.style={regular polygon,regular polygon sides=4}]
\node (1) at (0.,0.) {\large $x_1$};
\node (2) at (2.50076,0.) {\large $x_2$};
\node (3) at (5.,0.) {\large $x_3$};
\node[square,draw] (u) at (0.,2) {\large $u_1$};
\node[square,draw] (y) at (0.,-2) {\large $y_1$};
\end{scope}
\begin{scope}[>={Stealth[black]},
              every edge/.style={draw=black, thick}]
\path [->] (1) edge[bend right=15] node {} (2);
\path [->] (2) edge[bend right=15] node {} (1);
\path [->] (2) edge[red,bend right=15] node {} (3);
\path [->] (3) edge[red,bend right=15] node {} (2);
\path [->] (1) edge[ao] node {} (y);
\path [->] (u) edge[bleudefrance] node {} (1);
\end{scope}
\end{tikzpicture}
}
\caption{Illustration of Algorithm~\ref{alg:main}.}
\label{fig:exemplo}
\end{figure}

Observe that the solution obtained with Algorithm~\ref{alg:main} is minimal, $|\mathcal I^\ast\cup\mathcal J^\ast|=1$ -- Figure~\ref{fig:exemplo}~(e) -- and the solution achieve with the previous methods is not minimal, $|\mathcal I^\ast\cup\mathcal J^\ast|=2$ -- Figure~\ref{fig:exemplo}~(c).


\subsection{Example 2}
Consider the structural matrix 
\[
\bar A_1 = 
\begin{bmatrix}
0 & 0 & \star & 0 & \star & 0 & 0 & 0 & \star & \star \\
0 & 0 & \star & 0 & 0 & 0 & 0 & 0 & 0 & 0 \\
\star & \star & 0 & 0 & 0 & \star & \star & 0 & 0 & 0\\
0 & 0 & 0 & 0 & \star & 0 & 0 & 0 & 0 & 0 \\
\star & 0 & 0 & \star & 0 & 0 & 0 & \star & 0 & 0 \\
0 & 0 & \star & 0 & 0 & 0 & 0 & 0 & 0 & 0 \\
0 & 0 & \star & 0 & 0 & 0 & 0 & 0 & 0 & 0 \\
0 & 0 & 0 & 0 & \star & 0 & 0 & 0 & 0 & 0 \\
\star & 0 & 0 & 0 & 0 & 0 & 0 & 0 & 0 & 0 \\
\star & 0 & 0 & 0 & 0 & 0 & 0 & 0 & 0 & 0 \\
\end{bmatrix},
\]
with digraph representation $\mathcal G(\bar A_1)$ depicted in Figure~\ref{fig:MWMM}~(a). 
A maximum matching of the bipartite graph representation $\mathcal B(\bar A_1)$ is depicted in Figure~\ref{fig:MWMM}~(b), and it corresponds to red edges in Figure~\ref{fig:MWMM}~(c) that yields an input placed at each state variable in $\mathcal I=\{x_2,x_6,x_8,x_{10}\}$, and an output placed at each state variable in $\mathcal J=\{x_4,x_6,x_7,x_9\}$. 
The cost function of $\mathcal P_1$ \underline{is not minimal}, $|\mathcal I\cup\mathcal J|=6$, as we detail next. 
Algorithm~\ref{alg:main} yields dedicated input and output placements to $\mathcal I=\mathcal J=\{x_6,x_7,x_8,x_{10}\}$ -- see Figure~\ref{fig:MWMM_2}~(a) and~(b). 
Hence, the cost function of $\mathcal P_1$ \underline{is minimal}, i.e., $|\mathcal I\cup\mathcal J|=4$.

\begin{figure}[H]
\centering
\subfigure[Digraph representation $\mathcal G(\bar A_1)$.]{
\begin{tikzpicture}[scale=.45, transform shape,node distance=1.5cm]
\begin{scope}[every node/.style={circle,thick,draw},square/.style={regular polygon,regular polygon sides=4}]
\node (1) at (9.86388,4.7212) {\Large $x_1$};
\node (3) at (5.21248,6.08031) {\Large $x_3$};
\node (5) at (14.166,6.30483) {\Large $x_5$};
\node (9) at (8.87808,1.9768) {\Large $x_9$};
\node (10) at (11.4592,2.12937) {\Large $x_{10}$};
\node (2) at (2.78625,4.08182) {\Large $x_2$};
\node (6) at (2.05665,6.84987) {\Large $x_6$};
\node (7) at (3.9797,8.98236) {\Large $x_7$};
\node (4) at (17.3146,5.68281) {\Large $x_4$};
\node (8) at (16.2332,8.70226) {\Large $x_8$};
\end{scope}
\begin{scope}[>={Stealth[black]},
              every edge/.style={draw=black, thick}]
\path [->] (1) edge[bend right=10] node {} (3);
\path [->] (3) edge[bend right=10] node {} (1);
\path [->] (1) edge[bend right=10] node {} (5);
\path [->] (5) edge[bend right=10] node {} (1);
\path [->] (1) edge[bend right=10] node {} (9);
\path [->] (9) edge[bend right=10] node {} (1);
\path [->] (1) edge[bend right=10] node {} (10);
\path [->] (10) edge[bend right=10] node {} (1);
\path [->] (2) edge[bend right=10] node {} (3);
\path [->] (3) edge[bend right=10] node {} (2);
\path [->] (3) edge[bend right=10] node {} (6);
\path [->] (6) edge[bend right=10] node {} (3);
\path [->] (3) edge[bend right=10] node {} (7);
\path [->] (7) edge[bend right=10] node {} (3);
\path [->] (4) edge[bend right=10] node {} (5);
\path [->] (5) edge[bend right=10] node {} (4);
\path [->] (5) edge[bend right=10] node {} (8);
\path [->] (8) edge[bend right=10] node {} (5);
\end{scope}
\end{tikzpicture}
}
\subfigure[Bipartite graph representation $\mathcal B(\bar A_1)$, with a maximum matching depicted by the red edges.]{
\begin{tikzpicture}[scale=.6, transform shape,node distance=1.5cm]
\begin{scope}[every node/.style={circle,thick,draw=white},square/.style={regular polygon,regular polygon sides=4}]
\node (1) at (0,2) {\Large $x_1^L$};
\node (2) at (1,2) {\Large $x_2^L$};
\node (3) at (2,2) {\Large $x_3^L$};
\node (4) at (3,2) {\Large $x_4^L$};
\node (5) at (4,2) {\Large $x_5^L$};
\node (6) at (5,2) {\Large $x_6^L$};
\node (7) at (6,2) {\Large $x_7^L$};
\node (8) at (7,2) {\Large $x_8^L$};
\node (9) at (8,2) {\Large $x_9^L$};
\node (10) at (9,2) {\Large $x_{10}^L$};
\node (1 ) at (0,-2) {\Large $x_1^R$};
\node (2 ) at (1,-2) {\Large $x_2^R$};
\node (3 ) at (2,-2) {\Large $x_3^R$};
\node (4 ) at (3,-2) {\Large $x_4^R$};
\node (5 ) at (4,-2) {\Large $x_5^R$};
\node (6 ) at (5,-2) {\Large $x_6^R$};
\node (7 ) at (6,-2) {\Large $x_7^R$};
\node (8 ) at (7,-2) {\Large $x_8^R$};
\node (9 ) at (8,-2) {\Large $x_9^R$};
\node (10 ) at (9,-2) {\Large $x_{10}^R$};
\end{scope}
\begin{scope}[>={Stealth[black]},
              every edge/.style={draw=black}]
\path [-] (1) edge node {} (3 );
\path [-] (3) edge node {} (1 );
\path [-] (1) edge node {} (5 );
\path [-] (5) edge node {} (1 );
\path [-] (1) edge[red] node {} (9 );
\path [-] (9) edge node {} (1 );
\path [-] (1) edge node {} (10 );
\path [-] (10) edge[red] node {} (1 );
\path [-] (2) edge[red] node {} (3 );
\path [-] (3) edge node {} (2 );
\path [-] (3) edge node {} (6 );
\path [-] (6) edge node {} (3 );
\path [-] (3) edge[red] node {} (7 );
\path [-] (7) edge node {} (3 );
\path [-] (4) edge node {} (5 );
\path [-] (5) edge[red] node {} (4 );
\path [-] (5) edge node {} (8 );
\path [-] (8) edge[red] node {} (5 );

\end{scope}
\end{tikzpicture}
}
\subfigure[Dedicated input and output placements, obtained with~\cite{pequito2015framework} (without accounting for the intersection of state variables that are actuated and sensed): $\mathcal I=\{x_2,x_6,x_8,x_{10}\}$, $\mathcal J=\{x_4,x_6,x_7,x_9\}$. The cost function of $\mathcal P_1$ \underline{is not minimal}, $|\mathcal I\cup\mathcal J|=6$.]{
\begin{tikzpicture}[scale=.45, transform shape,node distance=1.5cm]
\begin{scope}[every node/.style={circle,thick,draw},square/.style={regular polygon,regular polygon sides=4}]
\node (1) at (9.86388,4.7212) {\Large $x_1$};
\node (3) at (5.21248,6.08031) {\Large $x_3$};
\node (5) at (14.166,6.30483) {\Large $x_5$};
\node (9) at (8.87808,1.9768) {\Large $x_9$};
\node (10) at (11.4592,2.12937) {\Large $x_{10}$};
\node (2) at (2.78625,4.08182) {\Large $x_2$};
\node (6) at (2.05665,6.84987) {\Large $x_6$};
\node (7) at (3.9797,8.98236) {\Large $x_7$};
\node (4) at (17.3146,5.68281) {\Large $x_4$};
\node (8) at (16.2332,8.70226) {\Large $x_8$};

\node[square,draw] (u1) at (2.78625,1.9768) {\Large $u_1$};
\node[square,draw] (u2) at (1,5) {\Large $u_2$};
\node[square,draw] (u3) at (14.166,8.70226) {\Large $u_3$};
\node[square,draw] (u4) at (13.8,2.12937) {\Large $u_4$};

\node[square,draw] (y1) at (17.3146,3.7) {\Large $y_1$};
\node[square,draw] (y2) at (1,8.65) {\Large $y_2$};
\node[square,draw] (y3) at (6.2,8.98236) {\Large $y_3$};
\node[square,draw] (y4) at (6.7,1.9768) {\Large $y_4$};


\end{scope}
\begin{scope}[>={Stealth[black]},
              every edge/.style={draw=black, thick}]
\path [->] (1) edge[bend right=10] node {} (3);
\path [->] (3) edge[bend right=10] node {} (1);
\path [->] (1) edge[bend right=10] node {} (5);
\path [->] (5) edge[bend right=10] node {} (1);
\path [->] (1) edge[red,bend right=10] node {} (9);
\path [->] (9) edge[bend right=10] node {} (1);
\path [->] (1) edge[bend right=10] node {} (10);
\path [->] (10) edge[red,bend right=10] node {} (1);
\path [->] (2) edge[red,bend right=10] node {} (3);
\path [->] (3) edge[bend right=10] node {} (2);
\path [->] (3) edge[bend right=10] node {} (6);
\path [->] (6) edge[bend right=10] node {} (3);
\path [->] (3) edge[red,bend right=10] node {} (7);
\path [->] (7) edge[bend right=10] node {} (3);
\path [->] (4) edge[bend right=10] node {} (5);
\path [->] (5) edge[red,bend right=10] node {} (4);
\path [->] (5) edge[bend right=10] node {} (8);
\path [->] (8) edge[red,bend right=10] node {} (5);

\path [->] (u1) edge[bleudefrance] node {} (2);
\path [->] (u2) edge[bleudefrance] node {} (6);
\path [->] (u3) edge[bleudefrance] node {} (8);
\path [->] (u4) edge[bleudefrance] node {} (10);

\path [->] (4) edge[ao] node {} (y1);
\path [->] (6) edge[ao] node {} (y2);
\path [->] (7) edge[ao] node {} (y3);
\path [->] (9) edge[ao] node {} (y4);

\end{scope}
\end{tikzpicture}
}
\caption{Illustrative example 1.}
\label{fig:MWMM}
\end{figure}
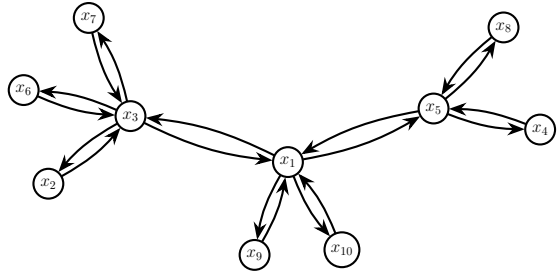
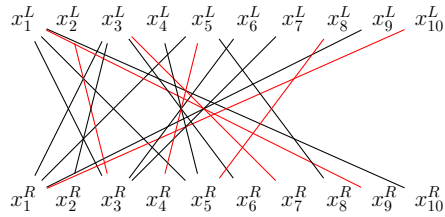
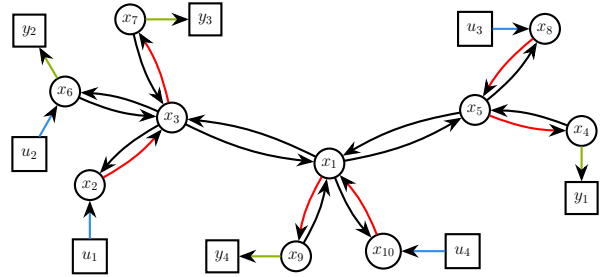

\begin{figure}[H]
\centering
\subfigure[MWMM of $\mathcal B(D)$, with edges' weights given by $D$, built with Algorithm~\ref{alg:main} for the input $\bar A_1$.]{
\begin{tikzpicture}[scale=.55, transform shape,node distance=1.5cm]
\begin{scope}[every node/.style={circle,thick,draw=white},square/.style={regular polygon,regular polygon sides=4}]
\node (1) at (0,2) {\Large $x_1^L$};
\node (2) at (1,2) {\Large $x_2^L$};
\node (3) at (2,2) {\Large $x_3^L$};
\node (4) at (3,2) {\Large $x_4^L$};
\node (5) at (4,2) {\Large $x_5^L$};
\node (6) at (5,2) {\Large $x_6^L$};
\node (7) at (6,2) {\Large $x_7^L$};
\node (8) at (7,2) {\Large $x_8^L$};
\node (9) at (8,2) {\Large $x_9^L$};
\node (10) at (9,2) {\Large $x_{10}^L$};
\node (1 ) at (0,-2.5) {\Large $x_1^R$};
\node (2 ) at (1,-2.5) {\Large $x_2^R$};
\node (3 ) at (2,-2.5) {\Large $x_3^R$};
\node (4 ) at (3,-2.5) {\Large $x_4^R$};
\node (5 ) at (4,-2.5) {\Large $x_5^R$};
\node (6 ) at (5,-2.5) {\Large $x_6^R$};
\node (7 ) at (6,-2.5) {\Large $x_7^R$};
\node (8 ) at (7,-2.5) {\Large $x_8^R$};
\node (9 ) at (8,-2.5) {\Large $x_9^R$};
\node (10 ) at (9,-2.5) {\Large $x_{10}^R$};
\node[square,draw] (u1) at (10,2) {\Large $u_1$};
\node[square,draw] (u2) at (11,2) {\Large $u_2$};
\node[square,draw] (u3) at (12,2) {\Large $u_3$};
\node[square,draw] (u4) at (13,2) {\Large $u_4$};
\node[square,draw] (u5) at (14,2) {\Large $u_5$};
\node[square,draw] (u6) at (15,2) {\Large $u_6$};
\node[square,draw] (u7) at (16,2) {\Large $u_7$};
\node[square,draw] (u8) at (17,2) {\Large $u_8$};
\node[square,draw] (u9) at (18,2) {\Large $u_9$};
\node[square,draw] (u10) at (19,2) {\Large $u_{10}$};
\node[square,draw] (y1) at (10,-2.5) {\Large $y_1$};
\node[square,draw] (y2) at (11,-2.5) {\Large $y_2$};
\node[square,draw] (y3) at (12,-2.5) {\Large $y_3$};
\node[square,draw] (y4) at (13,-2.5) {\Large $y_4$};
\node[square,draw] (y5) at (14,-2.5) {\Large $y_5$};
\node[square,draw] (y6) at (15,-2.5) {\Large $y_6$};
\node[square,draw] (y7) at (16,-2.5) {\Large $y_7$};
\node[square,draw] (y8) at (17,-2.5) {\Large $y_8$};
\node[square,draw] (y9) at (18,-2.5) {\Large $y_9$};
\node[square,draw] (y10) at (19,-2.5) {\Large $y_{10}$};
\end{scope}
\begin{scope}[>={Stealth[black]},
              every edge/.style={draw=black}]
\path [-] (1) edge node {} (3 );
\path [-] (3) edge node {} (1 );
\path [-] (1) edge node {} (5 );
\path [-] (5) edge node {} (1 );
\path [-] (1) edge[red] node {} (9 );
\path [-] (9) edge[red] node {} (1 );
\path [-] (1) edge node {} (10 );
\path [-] (10) edge node {} (1 );
\path [-] (2) edge[red] node {} (3 );
\path [-] (3) edge[red] node {} (2 );
\path [-] (3) edge node {} (6 );
\path [-] (6) edge node {} (3 );
\path [-] (3) edge node {} (7 );
\path [-] (7) edge node {} (3 );
\path [-] (4) edge[red] node {} (5 );
\path [-] (5) edge[red] node {} (4 );
\path [-] (5) edge node {} (8 );
\path [-] (8) edge node {} (5 );

\path [-] (u1) edge[dashed] node {} (1 );
\path [-] (u2) edge[dashed] node {} (2 );
\path [-] (u3) edge[dashed] node {} (3 );
\path [-] (u4) edge[dashed] node {} (4 );
\path [-] (u5) edge[dashed] node {} (5 );
\path [-] (u6) edge[red,dashed] node {} (6 );
\path [-] (u7) edge[red,dashed] node {} (7 );
\path [-] (u8) edge[red,dashed] node {} (8 );
\path [-] (u9) edge[dashed] node {} (9 );
\path [-] (u10) edge[red,dashed] node {} (10 );
\path [-] (1) edge[dashed] node {} (y1);
\path [-] (2) edge[dashed] node {} (y2);
\path [-] (3) edge[dashed] node {} (y3);
\path [-] (4) edge[dashed] node {} (y4);
\path [-] (5) edge[dashed] node {} (y5);
\path [-] (6) edge[red,dashed] node {} (y6);
\path [-] (7) edge[red,dashed] node {} (y7);
\path [-] (8) edge[red,dashed] node {} (y8);
\path [-] (9) edge[dashed] node {} (y9);
\path [-] (10) edge[red,dashed] node {} (y10);
\path [-] (u1) edge[red,dashed] node {} (y1);
\path [-] (u2) edge[red,dashed] node {} (y2);
\path [-] (u3) edge[red,dashed] node {} (y3);
\path [-] (u4) edge[red,dashed] node {} (y4);
\path [-] (u5) edge[red,dashed] node {} (y5);
\path [-] (u6) edge[dashed] node {} (y6);
\path [-] (u7) edge[dashed] node {} (y7);
\path [-] (u8) edge[dashed] node {} (y8);
\path [-] (u9) edge[red,dashed] node {} (y9);
\path [-] (u10) edge[dashed] node {} (y10);
\end{scope}
\end{tikzpicture}
}
\subfigure[Dedicated input and output placements, obtained with Algorithm~\ref{alg:main}$: \mathcal I=\mathcal J=\{x_6,x_7,x_8,x_{10}\}$. The cost function of $\mathcal P_1$ \underline{is minimal}, $|\mathcal I\cup\mathcal J|=4$.]{
\begin{tikzpicture}[scale=.45, transform shape,node distance=1.5cm]
\begin{scope}[every node/.style={circle,thick,draw},square/.style={regular polygon,regular polygon sides=4}]
\node (1) at (9.86388,4.7212) {\Large $x_1$};
\node (3) at (5.21248,6.08031) {\Large $x_3$};
\node (5) at (14.166,6.30483) {\Large $x_5$};
\node (9) at (8.87808,1.9768) {\Large $x_9$};
\node (10) at (11.4592,2.12937) {\Large $x_{10}$};
\node (2) at (2.78625,4.08182) {\Large $x_2$};
\node (6) at (2.05665,6.84987) {\Large $x_6$};
\node (7) at (3.9797,8.98236) {\Large $x_7$};
\node (4) at (17.3146,5.68281) {\Large $x_4$};
\node (8) at (16.2332,8.70226) {\Large $x_8$};

\node[square,draw] (u1) at (0,6.84987) {\Large $u_1$};
\node[square,draw] (u2) at (1.8,8.98236) {\Large $u_2$};
\node[square,draw] (u3) at (14.1,8.70226) {\Large $u_3$};
\node[square,draw] (u4) at (13.6,2.12937) {\Large $u_4$};

\node[square,draw] (y1) at (0.9,5.) {\Large $y_1$};
\node[square,draw] (y2) at (6.1,8.98236) {\Large $y_2$};
\node[square,draw] (y3) at (18.2332,8.70226) {\Large $y_3$};
\node[square,draw] (y4) at (13,4.12937) {\Large $y_4$};

\end{scope}
\begin{scope}[>={Stealth[black]},
              every edge/.style={draw=black, thick}]
\path [->] (1) edge[bend right=10] node {} (3);
\path [->] (3) edge[bend right=10] node {} (1);
\path [->] (1) edge[bend right=10] node {} (5);
\path [->] (5) edge[bend right=10] node {} (1);
\path [->] (1) edge[red,bend right=10] node {} (9);
\path [->] (9) edge[red,bend right=10] node {} (1);
\path [->] (1) edge[bend right=10] node {} (10);
\path [->] (10) edge[bend right=10] node {} (1);
\path [->] (2) edge[red,bend right=10] node {} (3);
\path [->] (3) edge[red,bend right=10] node {} (2);
\path [->] (3) edge[bend right=10] node {} (6);
\path [->] (6) edge[bend right=10] node {} (3);
\path [->] (3) edge[bend right=10] node {} (7);
\path [->] (7) edge[bend right=10] node {} (3);
\path [->] (4) edge[red,bend right=10] node {} (5);
\path [->] (5) edge[red,bend right=10] node {} (4);
\path [->] (5) edge[bend right=10] node {} (8);
\path [->] (8) edge[bend right=10] node {} (5);

\path [->] (u1) edge[bleudefrance] node {} (6);
\path [->] (u2) edge[bleudefrance] node {} (7);
\path [->] (u3) edge[bleudefrance] node {} (8);
\path [->] (u4) edge[bleudefrance] node {} (10);

\path [->] (6) edge[ao] node {} (y1);
\path [->] (7) edge[ao] node {} (y2);
\path [->] (8) edge[ao] node {} (y3);
\path [->] (10) edge[ao] node {} (y4);

\end{scope}
\end{tikzpicture}
}
\caption{Illustrative example 1: input and output placement using Algorithm~\ref{alg:main}.}
\label{fig:MWMM_2}
\end{figure}
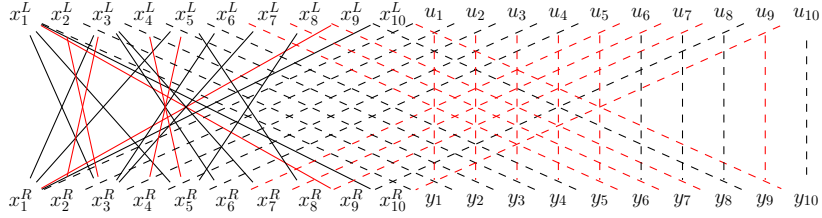
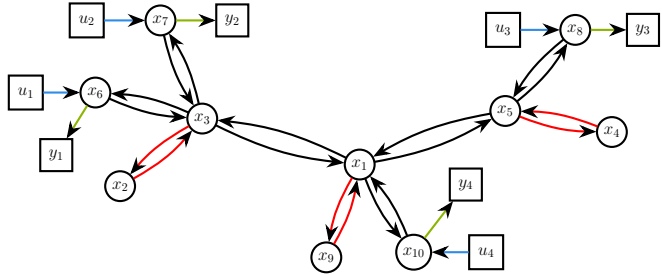

\subsection{Example 3}

In the next example, we consider a MADS with a strongly connected network, where the edges do not correspond to bidirectional communication as in the two previous examples. 
We consider the structural matrix 
\[
\bar A_2=\begin{bmatrix}
0 & 0 & 0 & 0 & 0 & 0 & \star & 0 & 0 & 0\\
0 & 0 & \star & 0 & 0 & \star & 0 & 0 & 0 & 0\\
0 & 0 & 0 & 0 & 0 & \star & 0 & 0 & 0 & \star\\
0 & 0 & \star & 0 & 0 & 0 & 0 & 0 & \star & 0\\
0 & 0 & 0 & 0 & 0 & 0 & 0 & 0 & \star & 0\\
0 & 0 & 0 & 0 & 0 & 0 & 0 & 0 & 0 & \star\\
\star & 0 & 0 & 0 & 0 & 0 & 0 & \star & 0 & 0\\
0 & 0 & 0 & \star & 0 & 0 & 0 & 0 & 0 & 0\\
0 & 0 & 0 & \star & \star & 0 & 0 & 0 & 0 & 0\\
\star & \star & \star & 0 & 0 & 0 & 0 & 0 & 0 & 0\\
\end{bmatrix},
\]
with digraph representation $\mathcal G(\bar A_2)$ depicted in Figure~\ref{fig:MWMM2}~(a). 
A maximum matching of the bipartite graph representation $\mathcal B(\bar A_2)$ is depicted in Figure~\ref{fig:MWMM2}~(b), and it corresponds to the red edges in Figure~\ref{fig:MWMM2}~(c) that yields an input placed at each state variable in $\mathcal I=\{x_6\}$, and an output placed at each state variable in $\mathcal J=\{x_8\}$. 
The cost function of $\mathcal P_1$ \underline{is not minimal}, $|\mathcal I\cup\mathcal J|=2$, as we explore next. 
By using Algorithm~\ref{alg:main}, we obtain a placement of dedicated inputs and outputs to the same state variables, $\mathcal I=\mathcal J=\{x_2\}$, see Figure~\ref{fig:MWMM2b}~(a) and~(b). 
Now, the cost function of $\mathcal P_1$ \underline{is minimal}, $|\mathcal I\cup\mathcal J|=1$. 

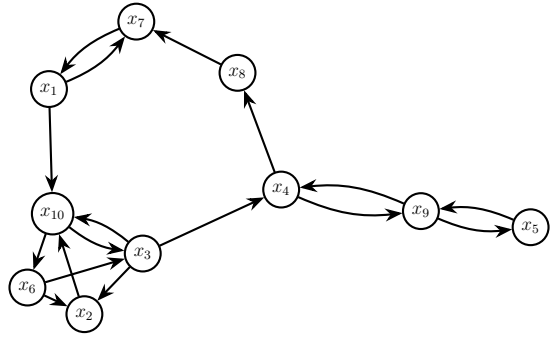
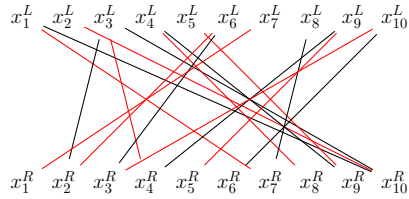
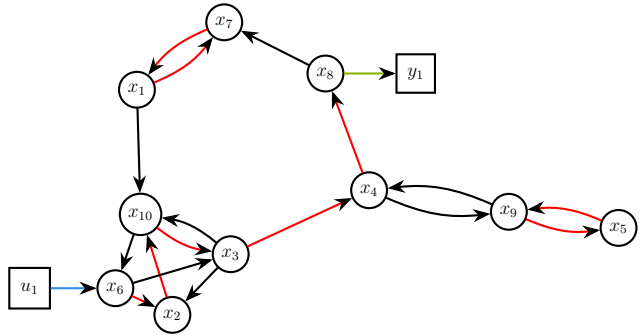
\begin{figure}[H]
\centering
\subfigure[Digraph representation $\mathcal G(\bar A_2)$.]{
\begin{tikzpicture}[scale=.6, transform shape,node distance=1.5cm]
\begin{scope}[every node/.style={circle,thick,draw},square/.style={regular polygon,regular polygon sides=4}]
\node (1) at (0.377759,4.99195) {\large $x_1$};
\node (2) at (1.16242,0.) {\large $x_2$};
\node (3) at (2.45522,1.34463) {\large $x_3$};
\node (4) at (5.52224,2.76325) {\large $x_4$};
\node (5) at (11.0489,1.92791) {\large $x_5$};
\node (6) at (-0.1,0.590678) {\large $x_6$};
\node (7) at (2.31223,6.48719) {\large $x_7$};
\node (8) at (4.55499,5.35156) {\large $x_8$};
\node (9) at (8.61862,2.28536) {\large $x_9$};
\node (10) at (0.45846,2.22768) {\large $x_{10}$};
\end{scope}
\begin{scope}[>={Stealth[black]},
              every edge/.style={draw=black, thick}]
\path [->] (1) edge[bend right=15] node {} (7);
\path [->] (1) edge node {} (10);
\path [->] (2) edge node {} (10);
\path [->] (3) edge node {} (2);
\path [->] (3) edge node {} (4);
\path [->] (3) edge[bend right=15] node {} (10);
\path [->] (4) edge node {} (8);
\path [->] (4) edge[bend right=15] node {} (9);
\path [->] (5) edge[bend right=15] node {} (9);
\path [->] (6) edge node {} (2);
\path [->] (6) edge node {} (3);
\path [->] (7) edge[bend right=15] node {} (1);
\path [->] (8) edge node {} (7);
\path [->] (9) edge[bend right=15] node {} (4);
\path [->] (9) edge[bend right=15] node {} (5);
\path [->] (10) edge[bend right=15] node {} (3);
\path [->] (10) edge node {} (6);
\end{scope}
\end{tikzpicture}
}

\subfigure[Bipartite graph representation $\mathcal B(\bar A_2)$, with a maximum matching depicted by the red edges.]{
\begin{tikzpicture}[scale=.55, transform shape,node distance=1.5cm]
\begin{scope}[every node/.style={circle,thick,draw=white},square/.style={regular polygon,regular polygon sides=4}]
\node (1) at (0,2) {\Large $x_1^L$};
\node (2) at (1,2) {\Large $x_2^L$};
\node (3) at (2,2) {\Large $x_3^L$};
\node (4) at (3,2) {\Large $x_4^L$};
\node (5) at (4,2) {\Large $x_5^L$};
\node (6) at (5,2) {\Large $x_6^L$};
\node (7) at (6,2) {\Large $x_7^L$};
\node (8) at (7,2) {\Large $x_8^L$};
\node (9) at (8,2) {\Large $x_9^L$};
\node (10) at (9,2) {\Large $x_{10}^L$};
\node (1 ) at (0,-2) {\Large $x_1^R$};
\node (2 ) at (1,-2) {\Large $x_2^R$};
\node (3 ) at (2,-2) {\Large $x_3^R$};
\node (4 ) at (3,-2) {\Large $x_4^R$};
\node (5 ) at (4,-2) {\Large $x_5^R$};
\node (6 ) at (5,-2) {\Large $x_6^R$};
\node (7 ) at (6,-2) {\Large $x_7^R$};
\node (8 ) at (7,-2) {\Large $x_8^R$};
\node (9 ) at (8,-2) {\Large $x_9^R$};
\node (10 ) at (9,-2) {\Large $x_{10}^R$};
\end{scope}
\begin{scope}[>={Stealth[black]},
              every edge/.style={draw=black}]
\path [-] (1) edge[red] node {} (7 );
\path [-] (1) edge node {} (10 );
\path [-] (2) edge[red] node {} (10 );
\path [-] (3) edge node {} (2 );
\path [-] (3) edge[red] node {} (4 );
\path [-] (3) edge node {} (10 );
\path [-] (4) edge[red] node {} (8 );
\path [-] (4) edge node {} (9 );
\path [-] (5) edge[red] node {} (9 );
\path [-] (6) edge[red] node {} (2 );
\path [-] (6) edge node {} (3 );
\path [-] (7) edge[red] node {} (1 );
\path [-] (8) edge node {} (7 );
\path [-] (9) edge node {} (4 );
\path [-] (9) edge[red] node {} (5 );
\path [-] (10) edge[red] node {} (3 );
\path [-] (10) edge node {} (6 );

\end{scope}
\end{tikzpicture}
}

\subfigure[Dedicated input and output placements, obtained with~\cite{pequito2015framework} (without accounting for the intersection of state variables that are actuated and sensed): $\mathcal I=\{x_6\}$, $\mathcal J=\{x_8\}$. The cost function of $\mathcal P_1$ \underline{is not minimal}, $|\mathcal I\cup\mathcal J|=2$.]{
\begin{tikzpicture}[scale=.6, transform shape,node distance=1.5cm]
\begin{scope}[every node/.style={circle,thick,draw},square/.style={regular polygon,regular polygon sides=4}]
\node (1) at (0.377759,4.99195) {\large $x_1$};
\node (2) at (1.16242,0.) {\large $x_2$};
\node (3) at (2.45522,1.34463) {\large $x_3$};
\node (4) at (5.52224,2.76325) {\large $x_4$};
\node (5) at (11.0489,1.92791) {\large $x_5$};
\node (6) at (-0.1,0.590678) {\large $x_6$};
\node (7) at (2.31223,6.48719) {\large $x_7$};
\node (8) at (4.55499,5.35156) {\large $x_8$};
\node (9) at (8.61862,2.28536) {\large $x_9$};
\node (10) at (0.45846,2.22768) {\large $x_{10}$};

\node[square,draw] (u1) at (-2,0.590678) {\large $u_1$};

\node[square,draw] (y1) at (6.55499,5.35156) {\large $y_1$};

\end{scope}
\begin{scope}[>={Stealth[black]},
              every edge/.style={draw=black, thick}]
\path [->] (1) edge[red,bend right=15] node {} (7);
\path [->] (1) edge node {} (10);
\path [->] (2) edge[red] node {} (10);
\path [->] (3) edge node {} (2);
\path [->] (3) edge[red] node {} (4);
\path [->] (3) edge[bend right=15] node {} (10);
\path [->] (4) edge[red] node {} (8);
\path [->] (4) edge[bend right=15] node {} (9);
\path [->] (5) edge[red,bend right=15] node {} (9);
\path [->] (6) edge[red] node {} (2);
\path [->] (6) edge node {} (3);
\path [->] (7) edge[red,bend right=15] node {} (1);
\path [->] (8) edge node {} (7);
\path [->] (9) edge[bend right=15] node {} (4);
\path [->] (9) edge[red,bend right=15] node {} (5);
\path [->] (10) edge[red,bend right=15] node {} (3);
\path [->] (10) edge node {} (6);

\path [->] (u1) edge[bleudefrance] node {} (6);

\path [->] (8) edge[ao] node {} (y1);

\end{scope}
\end{tikzpicture}
}
\caption{Illustrative example 2.}
\label{fig:MWMM2}
\end{figure}

\begin{figure}[H]
\centering
\subfigure[MWMM of $\mathcal B(D)$, with edges' weights given by $D$, built with Algorithm~\ref{alg:main} for the input $\bar A_2$.]{
\begin{tikzpicture}[scale=.55, transform shape,node distance=1.5cm]
\begin{scope}[every node/.style={circle,thick,draw=white},square/.style={regular polygon,regular polygon sides=4}]
\node (1) at (0,2) {\Large $x_1^L$};
\node (2) at (1,2) {\Large $x_2^L$};
\node (3) at (2,2) {\Large $x_3^L$};
\node (4) at (3,2) {\Large $x_4^L$};
\node (5) at (4,2) {\Large $x_5^L$};
\node (6) at (5,2) {\Large $x_6^L$};
\node (7) at (6,2) {\Large $x_7^L$};
\node (8) at (7,2) {\Large $x_8^L$};
\node (9) at (8,2) {\Large $x_9^L$};
\node (10) at (9,2) {\Large $x_{10}^L$};
\node (1 ) at (0,-2.5) {\Large $x_1^R$};
\node (2 ) at (1,-2.5) {\Large $x_2^R$};
\node (3 ) at (2,-2.5) {\Large $x_3^R$};
\node (4 ) at (3,-2.5) {\Large $x_4^R$};
\node (5 ) at (4,-2.5) {\Large $x_5^R$};
\node (6 ) at (5,-2.5) {\Large $x_6^R$};
\node (7 ) at (6,-2.5) {\Large $x_7^R$};
\node (8 ) at (7,-2.5) {\Large $x_8^R$};
\node (9 ) at (8,-2.5) {\Large $x_9^R$};
\node (10 ) at (9,-2.5) {\Large $x_{10}^R$};
\node[square,draw] (u1) at (10,2) {\Large $u_1$};
\node[square,draw] (u2) at (11,2) {\Large $u_2$};
\node[square,draw] (u3) at (12,2) {\Large $u_3$};
\node[square,draw] (u4) at (13,2) {\Large $u_4$};
\node[square,draw] (u5) at (14,2) {\Large $u_5$};
\node[square,draw] (u6) at (15,2) {\Large $u_6$};
\node[square,draw] (u7) at (16,2) {\Large $u_7$};
\node[square,draw] (u8) at (17,2) {\Large $u_8$};
\node[square,draw] (u9) at (18,2) {\Large $u_9$};
\node[square,draw] (u10) at (19,2) {\Large $u_{10}$};
\node[square,draw] (y1) at (10,-2.5) {\Large $y_1$};
\node[square,draw] (y2) at (11,-2.5) {\Large $y_2$};
\node[square,draw] (y3) at (12,-2.5) {\Large $y_3$};
\node[square,draw] (y4) at (13,-2.5) {\Large $y_4$};
\node[square,draw] (y5) at (14,-2.5) {\Large $y_5$};
\node[square,draw] (y6) at (15,-2.5) {\Large $y_6$};
\node[square,draw] (y7) at (16,-2.5) {\Large $y_7$};
\node[square,draw] (y8) at (17,-2.5) {\Large $y_8$};
\node[square,draw] (y9) at (18,-2.5) {\Large $y_9$};
\node[square,draw] (y10) at (19,-2.5) {\Large $y_{10}$};
\end{scope}
\begin{scope}[>={Stealth[black]},
              every edge/.style={draw=black}]
\path [-] (1) edge node {} (7 );
\path [-] (1) edge[red] node {} (10 );
\path [-] (2) edge node {} (10 );
\path [-] (3) edge node {} (2 );
\path [-] (3) edge[red] node {} (4 );
\path [-] (3) edge node {} (10 );
\path [-] (4) edge[red] node {} (8 );
\path [-] (4) edge node {} (9 );
\path [-] (5) edge[red] node {} (9 );
\path [-] (6) edge node {} (2 );
\path [-] (6) edge[red] node {} (3 );
\path [-] (7) edge[red] node {} (1 );
\path [-] (8) edge[red] node {} (7 );
\path [-] (9) edge node {} (4 );
\path [-] (9) edge[red] node {} (5 );
\path [-] (10) edge node {} (3 );
\path [-] (10) edge[red] node {} (6 );

\path [-] (u1) edge[dashed] node {} (1 );
\path [-] (u2) edge[red,dashed] node {} (2 );
\path [-] (u3) edge[dashed] node {} (3 );
\path [-] (u4) edge[dashed] node {} (4 );
\path [-] (u5) edge[dashed] node {} (5 );
\path [-] (u6) edge[dashed] node {} (6 );
\path [-] (u7) edge[dashed] node {} (7 );
\path [-] (u8) edge[dashed] node {} (8 );
\path [-] (u9) edge[dashed] node {} (9 );
\path [-] (u10) edge[dashed] node {} (10 );
\path [-] (1) edge[dashed] node {} (y1);
\path [-] (2) edge[red,dashed] node {} (y2);
\path [-] (3) edge[dashed] node {} (y3);
\path [-] (4) edge[dashed] node {} (y4);
\path [-] (5) edge[dashed] node {} (y5);
\path [-] (6) edge[dashed] node {} (y6);
\path [-] (7) edge[dashed] node {} (y7);
\path [-] (8) edge[dashed] node {} (y8);
\path [-] (9) edge[dashed] node {} (y9);
\path [-] (10) edge[dashed] node {} (y10);
\path [-] (u1) edge[red,dashed] node {} (y1);
\path [-] (u2) edge[dashed] node {} (y2);
\path [-] (u3) edge[red,dashed] node {} (y3);
\path [-] (u4) edge[red,dashed] node {} (y4);
\path [-] (u5) edge[red,dashed] node {} (y5);
\path [-] (u6) edge[red,dashed] node {} (y6);
\path [-] (u7) edge[red,dashed] node {} (y7);
\path [-] (u8) edge[red,dashed] node {} (y8);
\path [-] (u9) edge[red,dashed] node {} (y9);
\path [-] (u10) edge[red,dashed] node {} (y10);
\end{scope}
\end{tikzpicture}
}
\subfigure[Dedicated input and output placements, obtained with Algorithm~\ref{alg:main}$: \mathcal I=\mathcal J=\{x_2\}$. The cost function of $\mathcal P_1$ \underline{is minimal}, $|\mathcal I\cup\mathcal J|=1$.]{
\begin{tikzpicture}[scale=.6, transform shape,node distance=1.5cm]
\begin{scope}[every node/.style={circle,thick,draw},square/.style={regular polygon,regular polygon sides=4}]
\node (1) at (0.377759,4.99195) {\large $x_1$};
\node (2) at (1.16242,0.) {\large $x_2$};
\node (3) at (2.45522,1.34463) {\large $x_3$};
\node (4) at (5.52224,2.76325) {\large $x_4$};
\node (5) at (11.0489,1.92791) {\large $x_5$};
\node (6) at (-0.1,0.590678) {\large $x_6$};
\node (7) at (2.31223,6.48719) {\large $x_7$};
\node (8) at (4.55499,5.35156) {\large $x_8$};
\node (9) at (8.61862,2.28536) {\large $x_9$};
\node (10) at (0.45846,2.22768) {\large $x_{10}$};

\node[square,draw] (u1) at (-0.8,-0.4) {\large $u_1$};

\node[square,draw] (y1) at (3.06242,-0.2) {\large $y_1$};

\end{scope}
\begin{scope}[>={Stealth[black]},
              every edge/.style={draw=black, thick}]
\path [->] (1) edge[bend right=15] node {} (7);
\path [->] (1) edge[red] node {} (10);
\path [->] (2) edge node {} (10);
\path [->] (3) edge node {} (2);
\path [->] (3) edge[red] node {} (4);
\path [->] (3) edge[bend right=15] node {} (10);
\path [->] (4) edge[red] node {} (8);
\path [->] (4) edge[bend right=15] node {} (9);
\path [->] (5) edge[red,bend right=15] node {} (9);
\path [->] (6) edge node {} (2);
\path [->] (6) edge[red] node {} (3);
\path [->] (7) edge[red,bend right=15] node {} (1);
\path [->] (8) edge[red] node {} (7);
\path [->] (9) edge[bend right=15] node {} (4);
\path [->] (9) edge[red,bend right=15] node {} (5);
\path [->] (10) edge[bend right=15] node {} (3);
\path [->] (10) edge[red] node {} (6);

\path [->] (u1) edge[bleudefrance] node {} (2);

\path [->] (2) edge[ao] node {} (y1);

\end{scope}
\end{tikzpicture}
}
\caption{ Input  and output placement using Algorithm~\ref{alg:main} for  Illustrative example 2.}
\label{fig:MWMM2b}
\end{figure}


\section{Conclusions}\label{sec:conclusion}

This paper studies the problem of given a MADS with strongly connected network, represented as an LTI system, identifying a minimal set of state variables to be actuated and a minimal set of state variables to be measured that achieve a maximum intersection while ensuring structural controllability and structural observability. 
We present a solution to the problem that couples the design of both structural controllability and structural observability counterparts, which has $\mathcal O(n^{3})$ (i.e., polynomial) time-complexity. 

Future work includes extending the proposed framework to the scenario where the network given by the dynamics matrix is not strongly connected, and to explore if it is possible to efficiently design solutions that account for robustness to input and output failures.

\subsection*{Code availability}
An implementation of Algorithm~\ref{alg:main}, using the Wolfram Mathematica\textsuperscript{\textregistered} programming language, is available via GitHub\footnote{\url{https://github.com/xuizy/structural_control/tree/joint_input_output}}.

\bibliography{bib.bib}

\end{document}